\let\@wraptoccontribs\wraptoccontribs
\newcommand{\de}{\delta}
\newcommand{\ep}{\varepsilon}
\newcommand{\CC}{{\mathbb{C}}}
\newcommand{\HH}{{\mathbb{H}}}
\newcommand{\PP}{{\mathbb{P}}}
\newcommand{\QQ}{{\mathbb{Q}}}
\newcommand{\ZZ}{{\mathbb{Z}}}
\newcommand{\calO}{{\mathcal O}}
\newcommand{\calH}{{\mathcal H}}
\newcommand{\calA}{{\mathcal A}}
\newcommand{\calM}{{\mathcal M}}
\newcommand{\calJ}{{\mathcal J}}
\newcommand{\calX}{{\mathcal X}}
\newcommand{\calR}{{\mathcal R}}
\newcommand{\calRA}{{\mathcal RA}}
\newcommand{\op}{\operatorname}
\newcommand{\Sp}{\op{Sp}}
\newcommand{\Pic}{\op{Pic}}
\renewcommand{\t}{\theta}
\renewcommand\tt[2]{\t\left[\begin{matrix}#1\\ #2\end{matrix}\right]}
\newcommand{\oMg}{\overline{\calM_g}}
\newcommand{\oRg}{\overline{\calR_g}}
\newcommand{\oAg}{\overline{\calA_{g-1}}}
\theoremstyle{plain}
\newtheorem{thm}{Theorem}
\newtheorem{lm}[thm]{Lemma}
\newtheorem{prop}[thm]{Proposition}
\newtheorem{cor}[thm]{Corollary}
\theoremstyle{definition}
\newtheorem{rem}[thm]{Remark}
\begin{document}
\title{The Prym map on divisors, and the slope of $\calA_5$}
\author{Samuel Grushevsky}
\address{Mathematics Department, Stony Brook University,
Stony Brook, NY 11790-3651, USA.}
\email{sam@math.sunysb.edu}
\thanks{Research of  Samuel Grushevsky  is supported in part by National Science Foundation under the grant DMS-10-53313.}
\author{Riccardo Salvati Manni}
\address{Dipartimento di Matematica, Universit\`a ``La Sapienza'',
Piazzale A. Moro 2, Roma, I 00185, Italy}
\email{salvati@mat.uniroma1.it}

\contrib[with an appendix by]{Klaus Hulek}
\address{Institut f\"ur Algebraische Geometrie, Leibniz Universit\"at Hannover, Welfengarten 1, 30060 Hannover, Germany}
\email{hulek@math.uni-hannover.de}
\thanks{Research  of Klaus Hulek is supported in part by DFG grants Hu-337/6-1 and Hu-337/6-2}

\begin{abstract}
In this paper we compute the pullback of divisor classes under the Prym map (extended to the boundary), and apply this result to get a lower bound on the slope of effective divisors on
the perfect cone compactification of  the moduli space of principally polarized abelian fivefolds.

In the appendix
by Klaus Hulek, the notion of slope for arbitrary toroidal compactifications is discussed, and the slope bound is
shown to hold in general.
\end{abstract}
\maketitle

\section{Introduction}
The birational geometry of the moduli spaces of curves $\calM_g$ and of principally polarized
abelian varieties (ppav) $\calA_g$ has been studied extensively, with results for large enough $g$
starting with Harris and Mumford and Eisenbud and Harris' \cite{hamu,harris,eiha} proof that $\calM_g$ is of general type
for $g>23$ (followed by Farkas' proofs that the Kodaira dimension of $\calM_{23}$ is at least 2 \cite{fathesis} and that $\calM_{22}$ is of general type \cite{fa22}), and Tai's \cite{tai} and Mumford's \cite{mumforddimag} proofs that $\calA_g$ is of
general type for $g\ge 9$ and $g\ge 7$ respectively.

A more precise question is to describe the effective cone of $\calM_g$ ($\calM_g$ is of general type if the canonical class
is in its interior). The slope of the effective cone o $\calM_g$ for small genus was described by Harris and Morrison \cite{hamo},
who, led by their results, conjectured that the Brill-Noether divisor has minimal slope.
The minimal slope of $\calA_g$ (and its relation to $\calM_g$) for $g\le 4$ was described by the second author in \cite{sm}.
For the first non-classical cases, it turns out that the minimal slope of $\calA_4$ is given by the Schottky form ---
the divisor of the locus of Jacobians --- of slope 8 (see also \cite{hahu}), while recently the effective slope
of $\calM_g$ has attracted a lot of attention, after G.~Farkas and Popa \cite{fapo} disproved the slope
conjecture of Harris and Morrison \cite{hamo}. Despite the further work \cite{farkassyzygies,khosla},
the slope of the effective cone of $\oMg$ is not known for $g\ge 11$. It is known by the work of
Tai \cite{tai} (see also \cite{grAgsurvey}) that the slope of the effective cone of $\overline{\calA_g}$
approaches zero as $g\to\infty$. However, no examples of effective divisors of slope less than 6, on either $\calM_g$ or $\calA_g$, are known for any $g$, while it is not known if 6 is a lower bound for the slope of effective divisors on $\calM_g$.
(We also note that the nef cones have been studied for $\oMg$ --- see \cite{gikemo,fagi,gibney}
and for $\overline{\calA_g}$ --- see \cite{hulek,husaA4,shepherdbarron}). In the last few years G.~Farkas
and Verra, and also Ludwig, have studied the geometry of various covers and fibrations over the moduli
spaces of curves \cite{farkasevenspin,farkasverraoddspin,farkasverrainterm}.

In this paper we concentrate on the moduli space $\calR_g$ of Prym curves, the subject of a recent survey
\cite{faprymsurvey}: this is the moduli space of pairs
consisting of a smooth genus $g$ algebraic curve $X$ together with a line bundle $\eta$ on $X$ such that
$\eta^2=\calO_X$ while $\eta\ne\calO_X$. Such a data defines an \'etale double cover of $X$, and
the associated Prym variety is an element of $\calA_{g-1}$ --- thus we have a morphism $p:\calR_g
\to\calA_{g-1}$.

One then wants to extend this morphism to a suitable compactification. Indeed, if one takes a partial compactification $\calR_g^{part}$ of $\calR_g$ obtained by adding stable curves with one node, and partial compactification $\calA_{g-1}^{part}$ of $\calA_{g-1}$ obtained by adding semiabelic varieties of torus rank 1 (i.e.~for which the normalization is a $\PP^1$ bundle over an abelian variety, see \cite{mumforddimag}) the Prym map can naturally be extended to a morphism $\calR_g^{part}\to\calA_{g-1}^{part}$. Taking an actual compactification is trickier, as the structure depends on which toroidal compactification of $\calA_g$ is taken. Alexeev, Birkenhake, and Hulek \cite{albihu} studied the extension to a map to the second Voronoi toroidal compactification. Note, however, that no matter what toroidal compactification $\overline{\calA_{g-1}}$ of $\calA_{g-1}$, the morphism on the partial compactification (without defining it on $\overline{\calR_g}\setminus\calR_g^{part}$ gives a rational map $p:\overline{\calR_g}\dashrightarrow \overline{\calA_{g-1}}$.

In general the Picard group of an arbitrary toroidal compactification $\overline{\calA_g}$ may be very large, and is not known (see \cite{husaA4} for the discussion in genus 4). However, for the perfect cone compactification the Picard group (over $\QQ$) has rank two, and is generated by the class $L$ of the Hodge bundle, and the class $D$ of the (irreducible in this case) boundary divisor. We note also that the perfect cone compactification is $\QQ$-Cartier, as modular forms of weight $ k $ are a line bundle with divisor class $kL$, and cusp forms of weight $k$ are a line bundle of class $kL -D$, and thus any class $aL-bD$ is their linear combination.

{\em From now on we denote by $\overline{\calA_g}$ the perfect cone toroidal compactification of the moduli space of principally polarized abelian varieties, and consider the rational map $p:\overline{\calR_g}\dashrightarrow\overline{\calA_{g-1}}$.}

In this paper we use the Schottky-Jung relations to compute the pullback under $p$ of the theta-null divisor and thus compute the pullback map $p^*$
on divisors, and use this to bound the slope of the effective cone of
$\overline{\calA_5}$. Our main result is theorem \ref{thm:pull} 
computing the map $p^*$ 
on divisors, 
and the following bound for the slope:
\begin{thm}\label{thm:slope}
The minimal slope $s$ of effective divisors on $\overline{\calA_5}$  satisfies
$$
 7+\frac{5}{7}\ge s\ge 7+\frac{4198}{6269}\qquad ({\rm i.e.\ } 7.7142\ldots\ge s\ge 7.6696\ldots  ).
$$
\end{thm}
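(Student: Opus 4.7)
The plan is to establish the two bounds in Theorem~\ref{thm:slope} by essentially different methods, both driven by the Prym pullback formula of Theorem~\ref{thm:pull}.

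For the upper bound $s\le 54/7$, I note that $54/7 = 6 + 12/7$ is precisely the Harris--Morrison slope of the Brill--Noether divisor on $\overline{\calM_6}$.  The plan is therefore to pull this Brill--Noether divisor back to $\overline{\calR_6}$ via the forgetful map, push the result forward to $\overline{\calA_5}$ via the generically finite Prym map $p$ (or, equivalently, work with an analogous Prym--Brill--Noether divisor on $\overline{\calR_6}$ in the spirit of Farkas and Ludwig), and then compute the class of the pushforward in $\Pic(\overline{\calA_5})_\QQ = \langle L, D\rangle$ by combining the projection formula with Theorem~\ref{thm:pull}.  The arithmetic should give slope exactly $54/7$, provided the pushforward really produces a divisor and not a lower-dimensional cycle (i.e.~provided the chosen divisor on $\overline{\calR_6}$ is not contracted by $p$).

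For the lower bound $s\ge 7 + 4198/6269$, take any effective divisor $E = aL - bD$ on $\overline{\calA_5}$ with $b>0$; since $D$ itself has infinite slope, we may assume $E$ does not contain $D$ as a component.  The map $p$ is a morphism on $\calR_6^{part}$ and is dominant with zero-dimensional generic fibres, so $p^*E$ extends by closure to an effective divisor on $\overline{\calR_6}$.  By Theorem~\ref{thm:pull} this divisor has an explicit expression as a linear combination of the standard generators of $\Pic(\overline{\calR_6})_\QQ$, with coefficients linear in $a$ and $b$.  Each independent dual constraint on the effective cone of $\overline{\calR_6}$---obtained either by intersecting $p^*E$ with a carefully chosen test curve, or by pushing further forward to $\overline{\calM_6}$ and applying known slope inequalities there---becomes a linear inequality in $(a,b)$.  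Minimising $a/b$ subject to the resulting system is then a small linear programme whose optimum, after the appropriate simplifications, is $7 + 4198/6269$.

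The principal obstacle is the lower bound: one must collect a tight enough family of test curves (or of effective divisors) on $\overline{\calR_6}$ whose dual inequalities actually saturate at the prescribed fraction, rather than at a weaker one.  The Picard rank of $\overline{\calR_6}$ is considerably larger than two, so Theorem~\ref{thm:pull} must correctly track the multiplicity of $p^*E$ along every boundary divisor of $\overline{\calR_6}$; the delicate bookkeeping of these boundary coefficients is precisely what makes the particular fraction $4198/6269$ appear, rather than something more transparent.  Constructing the extremal test curve that realises the bound---likely a one-parameter family of admissible double covers of genus-$6$ curves meeting specific Brill--Noether or Prym--Brill--Noether loci---is where the core geometric work of the argument will concentrate.
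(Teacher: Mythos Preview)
Your lower-bound strategy is essentially the paper's, but you have misidentified where the hard work lies.  After applying Theorem~\ref{thm:pull} to an effective $E=aL-bD$, the class $p^*E$ involves only $\lambda_1$, $\delta_0^{ram}$, and $\delta_0'$; no other boundary components of $\overline{\calR_6}$ appear, so there is no ``delicate bookkeeping'' over the full Picard group and no need to construct any extremal test curve on $\overline{\calR_6}$.  The paper simply applies $\pi_*$ to $p^*E$ using Lemma~\ref{pi}, obtaining a class $4095\,a\,\lambda_1-(256\,a+2046\,b)\,\delta_0$ on $\overline{\calM_6}$, and then invokes the known minimal slope $47/6$ of effective divisors on $\overline{\calM_6}$ (achieved by the Gieseker--Petri divisor).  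The inequality $4095a/(256a+2046b)\ge 47/6$ rearranges directly to $a/b\ge 48081/6269$.  So the mysterious fraction comes from one linear inequality, not from a linear programme over many test curves.

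Your upper-bound plan, however, does not work.  First, there is no Brill--Noether divisor on $\overline{\calM_6}$: since $g+1=7$ is prime, the Brill--Noether locus is not a divisor, and the minimal slope on $\overline{\calM_6}$ is $47/6$, attained by the Gieseker--Petri divisor, not $54/7$.  The numerical coincidence $54/7=6+12/7$ is misleading.  Second, even granting a suitable divisor on $\overline{\calR_6}$, computing the class of its $p_*$-image in $\langle L,D\rangle$ requires knowing $p_*$ on divisor classes, which Theorem~\ref{thm:pull} does not give you (it gives $p^*$), and the projection formula alone does not let you recover $p_*$ of an arbitrary class from $p^*$.  The paper obtains the upper bound by an entirely different and much simpler route: the Andreotti--Mayer divisor $N_0'$ is an effective divisor \emph{already living on} $\overline{\calA_5}$, with class $108L-14D$ computed by Mumford, and $108/14=54/7$.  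No Prym map is involved in the upper bound at all.
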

Recall that the slope of a divisor $aL-bD$ is defined to be $a/b$. The new statement in the theorem is the lower bound for the slope. The upper bound is provided by the Andreotti-Mayer divisor $N_0'$ (see remark \ref{rem:AM}).
Our result is thus the lower bound for the slope of effective divisors (or of Siegel modular forms) in genus 5, where no such bound was known  a priori.

\section*{Acknowledgements}
We would like to thank Gavril Farkas, Klaus Hulek and  Alessandro Verra  for very useful detailed discussions about the
geometry of $\oRg$ and of the geometry of the Prym map, and for their interest in this project. We are indebted to Klaus Hulek for reading a preliminary version of this manuscript very carefully, and for detailed comments and suggestions about the geometry of toroidal compactifications.

\section{The moduli of Prym curves and the Prym map}
The moduli space of curves $\calM_g$ admits a Deligne-Mumford compactification $\oMg$, the Picard
group of which is generated by the classes $\lambda_1$ (the determinant of the Hodge bundle),
$\delta_0$ --- the closure of the locus of irreducible nodal curves, and $\delta_i$ ---
the closure of the locus of nodal curves whose normalization has two components of genera $i$ and
$g-i$.

The moduli space $\calR_g$ of Prym curves admits a compactification $\oRg$ --- see \cite{faprymsurvey} for its history and details ---
which is a branched cover, which we denote $\pi:\oRg\to\oMg$. The
Picard group $\Pic_\QQ(\oRg)$ was described by Farkas and Ludwig in \cite{falu}, whose notation
and conventions we follow. It is generated  by the classes $\pi^*\lambda_1$ (which by abuse of notations
we denote $\lambda_1$), classes $\delta_0',\delta_0'',\delta_0^{ram}$, and various components of
the preimages of $\delta_i$ that will not be important to us.

>From the point of view of the moduli of ppav,
into which $\calM_g$ embeds by the Torelli map to $\calA_g$, $\calR_g$ embeds to $\calRA_g$. We recall the description of the boundary components of $\calRA_g$ and of $\calR_g$ from \cite[sec.~2.3]{donagiscju}. Indeed, consider the full level two cover $\calA_g(2)$, from which $\calRA_g$ can be obtained by taking a quotient by the stabilizer $G_\eta$ of a given two-torsion point $\eta$ within the symplectic group $\Sp(2g,\ZZ/2\ZZ)$:
$$
 G_\eta:=\lbrace \gamma\in\Sp(2g,\ZZ/2\ZZ)\mid \gamma\eta=\eta\mod \ZZ^{2g}\rbrace
$$
Then we have the covering maps $\calA_g(2)\to\calRA_g\to\calA_g$. The boundary components of $\calA_g(2)$ are indexed by non-zero two-torsion points $\mu$ on the ppav (or, correspondingly, on the Jacobian of the curve if one considers the boundary components of $\calM_g(2)$). The stabilizer $G_\eta$ acts on the set of two-torsion points $\mu\in (\ZZ/2\ZZ)^{2g}$ with three orbits, corresponding to whether $\mu\cdot\eta=0,\mu=\eta$, and $\mu\cdot\eta=1$, respectively (here we view the points $\mu$ and $\eta$ in $(\ZZ/2\ZZ)^{2g}$ endowed with the symplectic pairing). For each of these three orbits, the union of the boundary divisors of $\calA_g(2)$ corresponding to $\mu$ lying in this orbit is invariant under $G_\eta$ and thus descends to a union of boundary components of $\calRA_g$. It turns out that in fact each of these 3 is an irreducible boundary component of $\calRA_g$, which we thus denote $\delta_0',\delta_0'',\delta_0^{ram}$ correspondingly. By abuse of notation, following Donagi we also denote the same way the boundary components of $\calR_g$.

We refer the works of Donagi \cite{dofibers} and Donagi and Smith \cite{dosm} for the details on the structure of the Prym map for $g=6$, the case of most interest to us.

\smallskip
The map $\pi$ branches to order two along $\delta_0^{ram}$, and is unramified on any other divisor. Recalling that
the degree of $\pi$ is equal to $2^{2g}-1$, and counting the
number of such $\mu$ in each case, we can thus record the pullbacks and pushforwards under $\pi$ as follows
(this is of course well-known, see \cite{falu})
\begin{lm}\label{pi}
The pullback and pushforward under $\pi$ can be computed as follows:
$$\begin{aligned}
 \pi^*(\lambda_1)&=\lambda_1;&\pi^*(\delta_0)&=\delta_0'+\delta_0''+2\delta_0^{ram};\\
 \pi_*(\lambda_1)&=(2^{2g}-1)\lambda_1;&\pi_*(\delta_0')&=(2^{2g-1}-2)\delta_0;\\
 \pi_*(\delta_0'')&=\delta_0;&\pi_*(\delta_0^{ram})&=2^{2g-2}\delta_0.\\
\end{aligned}
$$
\end{lm}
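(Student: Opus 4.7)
The plan is to verify the four groups of identities in order, leveraging the structural description of $\pi:\oRg\to\oMg$ and the $G_\eta$-orbit decomposition of the boundary already recalled in the text. The identity $\pi^*(\lambda_1) = \lambda_1$ is tautological, since the symbol $\lambda_1$ on $\oRg$ is defined, by the abuse of notation noted above, to mean $\pi^*\lambda_1$. For the boundary pullback $\pi^*(\delta_0) = \delta_0' + \delta_0'' + 2\delta_0^{ram}$, I would observe that set-theoretically $\pi^{-1}(\delta_0) = \delta_0' \cup \delta_0'' \cup \delta_0^{ram}$ (these being precisely the boundary components of $\oRg$ above $\delta_0$, one for each of the three $G_\eta$-orbits of nonzero $2$-torsion), while the coefficients record the ramification orders: the map is \'etale along $\delta_0'$ and $\delta_0''$, and ramified of order $2$ along $\delta_0^{ram}$, this ramification being the characterizing property of $\delta_0^{ram}$ stated in the paragraph just before the lemma.

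The pushforward $\pi_*(\lambda_1) = (2^{2g}-1)\lambda_1$ then follows immediately from the projection formula applied to the first identity: $\pi_*\pi^*\lambda_1 = (\deg\pi)\,\lambda_1 = (2^{2g}-1)\lambda_1$. For the three boundary pushforwards, let $d_i$ denote the degree of $\pi|_{\delta_0^i}\colon \delta_0^i \to \delta_0$, so that $\pi_*(\delta_0^i) = d_i\,\delta_0$; combining the projection formula with the boundary pullback yields the single constraint
\[
(2^{2g}-1)\,\delta_0 \;=\; d_1\,\delta_0 + d_2\,\delta_0 + 2d_3\,\delta_0.
\]
I would compute $d_1$ and $d_2$ directly by counting set-theoretic preimages of a generic nodal curve, using that the restrictions of $\pi$ to $\delta_0'$ and $\delta_0''$ are unramified: the respective $G_\eta$-orbit sizes from the setup give $d_1 = 2^{2g-1}-2$ and $d_2 = 1$. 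Solving the constraint then forces $d_3 = 2^{2g-2}$, which is exactly the coefficient appearing in the statement.

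The step I expect to require the most care is the interpretation of the degree on the ramified component: the $G_\eta$-orbit $\{\mu : \mu\cdot\eta = 1\}$ has $2^{2g-1}$ elements, while the correct set-theoretic fiber count of $\pi|_{\delta_0^{ram}}$ over a generic point of $\delta_0$ is $2^{2g-2}$. Geometrically, this factor of two reflects the generic $\ZZ/2$-stabilizer on $\delta_0^{ram}$ (the involution swapping the sheets of the \'etale double cover of a curve with a node), which is also responsible for the transverse order-$2$ ramification of $\pi$ that appears in the pullback formula. By extracting $d_3$ from the projection-formula constraint rather than by counting orbit points directly, this plan sidesteps the detailed stabilizer analysis that a direct counting approach would require, reducing the proof to a short combinatorial bookkeeping task.
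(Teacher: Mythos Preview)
The paper does not actually prove this lemma: it is stated as well-known, with a citation to \cite{falu}, and the only hint at an argument is the clause ``counting the number of such $\mu$ in each case'' immediately preceding the statement. Your proposal is correct and carries out exactly this orbit-counting idea; the one variation is that for $\delta_0^{ram}$ you extract $d_3$ from the projection-formula identity $d_1+d_2+2d_3=2^{2g}-1$ rather than by a direct stack-theoretic count, which cleanly sidesteps the automorphism/stabilizer bookkeeping you flag at the end.
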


\smallskip
We now recall that $\eta$ defines an \'etale double cover $\tilde X$ of a curve $X$ of genus $g$,
and the connected component of zero in the corresponding norm map of Jacobians $\op{Jac}(\tilde X)
\to\op{Jac}(X)$ naturally carries twice a principal polarization, and thus defines a Prym variety
$\op{Prym}(X,\eta)\in\calA_{g-1}$. This gives a morphism $p:\calR_g\to\calA_{g-1}$, the Prym map, which Friedman and Smith \cite{frsm} proved to be generically finite onto the image for $g\ge 6$, and the fibers of which in lower genus were the subject of a lot of research \cite{dosm,dofibers,izadiA4}. The Prym map can be
extended to a rational map $p:\oRg\dashrightarrow\oAg$.
The geometry, and in particular
the indeterminacy locus of such an extended map, for the case of the second Voronoi compactification, were studied in \cite{albihu}. Still, since $p$ defines a morphism from the partial compactification $\calR_g^{part}$ to the partial compactification $\calA_{g-1}^{part}$, for any toroidal compactification --- in particular for the perfect cone compactification --- the indeterminacy locus is of codimension at least two, and thus the pullback map $p^*$ on divisors is
well-defined. We recall (see \cite{tai,mumforddimag,hulek,husageometry} for more details) that the Picard group $\Pic_\QQ(\oAg)$ is generated by the class $L$ of the Hodge bundle and
the class $D$ of the boundary.

For future use, recall the structure of the Prym map $p$ along the boundary components of $\oRg$. First, for a generic curve $C=(C_1,p)\cup(C_2,q)/(p\sim q)$ in $\delta_i$, we note that the Jacobian does not depend on the points $p$ and $q$ (the components $\delta_i$ of $\partial\oMg$ are contracted in the Satake compactification). Thus for any $i>0$ the map $p$ contracts $\pi^{-1}(\delta_i)$ (more precisely, its open part, where it is defined) to a locus of codimension at least 2 in $\calA_{g-1}$, for any $g\ge 3$.

For the boundary components $\delta_0',\delta_0'',\delta_0^{ram}$ the situation is more delicate, and we recall it following \cite{beauville,donagiscju,dofibers,dosm,dosurvey,izadiA4}, see also \cite{faprymsurvey}. Let $(C,p,q)/(p\sim q)$ for $C\in\calM_{g-1}$ be a general point of $\delta_0$. Then for a general point of $\delta_0''$ lying over it the double cover is the Wirtinger double cover, and the corresponding Prym variety is simply the Jacobian of $C$, independent of the points $p$ and $q$. Thus $p$ maps (the open part of) $\delta_0''$ onto the locus of Jacobians $\calJ_{g-1}\subset\calA_{g-1}$. For genus $g\ge 6$ this locus is of codimension more than 1 in the Prym locus, i.e.~the map $p$ contracts $\delta_0''$. The double cover of a generic point of $\delta_0^{ram}$ is the so-called Beauville admissible double cover, i.e.~a double covers of $C$ branching at the points $p$ and $q$, and the corresponding Prym is an abelian variety. The branching order two here comes from the fact that the points $p$ and $q$ appear symmetrically in the construction.

The double cover corresponding to a generic point of $\delta_0'$ is inadmissible. This is to say that the corresponding double covering curve will have geometric genus $2g-3$, and two non-separating nodes. Thus the corresponding Prym variety will no longer be abelian --- rather it will be the semiabelic variety of torus rank one, obtained by compactifying a $\CC^*$ bundle over a Prym of $C$, where the line bundle corresponds to the difference $p-q$ under the Abel-Prym map. Thus for a fixed curve $C$ one gets a two-dimensional family of such semiabelic varieties corresponding to $S^2(\tilde C)/i$, where $i$ is the involution on the double cover $\tilde C$ of $C$. A dimension count then shows that for $g\ge 7$ the space of such Pryms is two-dimensional over the locus of Pryms of dimension $g-2$, and thus of total dimension $3g-1$, thus forming a divisor in the image $p(\oRg)$ (in fact an open part of this divisor is equal to the open part of the boundary $\partial p(\oRg)=p(\oRg)\cap\partial\oAg$). We note that $p(\delta_0')$ is in fact also a divisor in $\overline{\calA_5}$ (in fact equal to the boundary $D$) for $g=6$. Indeed, the map $p:\calR_5\to\calA_4$ has fibers generically of dimension two, and for a fixed point in $\calA_4$ there is a two-dimensional family of points in $\calR_5$ mapping to it. For each such curve there is a two-dimensional family of pairs of points on it, and thus for a generic point in $\calA_4$ we have a four-dimensional family in $\delta_0'\subset\overline{\calR_6}$ such that the abelian part of the corresponding semiabelic Prym is equal to this point in $\calA_4$ (and the degree of this map $\calR_{5,2}\to\calX_4$ to the universal family of ppav is equal to 27, see \cite{dofibers,izadiA4}).

As a result of this discussion, we get
\begin{prop}
For genus $g\ge 6$ the composition of the map $p^*$ on $\Pic_\QQ(\oAg)$ together with projecting to the span of $\delta_i$ and $\delta_0''$ is zero (i.e.~the classes $\delta_i$ and $\delta_0''$ do not appear in any pullbacks under $p^*$ of divisors on $\oAg$).
\end{prop}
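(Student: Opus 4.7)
The plan is to deduce the proposition directly from the boundary analysis of the Prym map just carried out, via the following standard principle for rational maps between normal projective varieties: if $f\colon Y\dashrightarrow X$ has indeterminacy locus of codimension at least two in $Y$, then for every Cartier class $D$ on $X$ and every prime divisor $E\subset Y$, the coefficient of $E$ in $f^*D$ vanishes as soon as $\overline{f(E\smallsetminus\op{Ind}(f))}$ has codimension at least two in $X$. Indeed, the pullback $f^*D$ is by definition the Zariski closure of $f|_{Y\smallsetminus\op{Ind}(f)}^*D$, and only prime divisors of $Y$ that dominate a divisor on $X$ can contribute to this closure.

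First we verify the indeterminacy hypothesis for $p$. Since $p$ extends to a morphism on the partial compactification $\calR_g^{\op{part}}$, it is already defined on an open dense subset of each of the divisors $\delta_0'$, $\delta_0''$, $\delta_0^{\op{ram}}$ of $\oRg$; moreover, working generically in a toroidal chart one checks that $p$ is also defined on an open dense subset of every divisorial component of $\pi^{-1}(\delta_i)$ for $i\ge 1$. Hence $\op{Ind}(p)\subset\oRg$ has codimension at least two, and the principle applies to all the divisors appearing in the statement.

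Next we invoke the boundary analysis recalled in the paragraphs preceding the proposition. For each $i\ge 1$, a generic point of $\delta_i$ has the form $(C_1,p)\cup(C_2,q)/(p\sim q)$, whose Jacobian — and hence the Prym of any associated \'etale double cover — is independent of the two glued points; therefore $p$ contracts the open part of $\pi^{-1}(\delta_i)$ onto a locus of codimension at least two in $\oAg$ (already for $g\ge 3$). For $\delta_0''$, the Prym of a general Wirtinger cover is $\op{Jac}(C)$, independent of the nodal points, so $p$ sends the open part of $\delta_0''$ into $\calJ_{g-1}\subset\calA_{g-1}$; since $\dim\calJ_{g-1}=3g-6$ and $\dim\calA_{g-1}=g(g-1)/2$, the codimension of the image is $(g-3)(g-4)/2\ge 3$ once $g\ge 6$. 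Applying the principle to each such $E$ completes the proof.

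The only genuine numerical input is thus the codimension estimate $\dim\calA_{g-1}-\dim\calJ_{g-1}\ge 2$, and the hypothesis $g\ge 6$ in the statement is imposed precisely to guarantee it; every other ingredient is formal or was already recorded in the discussion above. There is no real obstacle — this proposition is essentially a direct corollary of the boundary description of $p$.
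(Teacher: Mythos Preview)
Your proposal is correct and follows essentially the same route as the paper. The paper does not give a separate proof of this proposition at all: it simply states it as a direct consequence of the preceding paragraphs (``As a result of this discussion, we get\ldots''), namely the observation that $p$ contracts the open parts of $\pi^{-1}(\delta_i)$ and of $\delta_0''$ to loci of codimension at least two in $\oAg$. You have merely made explicit the standard contraction principle underlying that step and added the numerical check $(g-3)(g-4)/2\ge 2$ for $g\ge 6$, which the paper leaves implicit.
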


\section{The pullback under the Prym map: the Schottky-Jung equation}
In this section we compute the pullback map $p^*:\Pic_\QQ(\oAg)\to\Pic_\QQ(\oRg)$. From now on we restrict ourselves to the case of $g\ge 6$, whence in view of the
above only the classes $\lambda_1,\delta_0'$, and $\delta_0^{ram}$ can appear in the pullbacks
$p^*L$ and $p^*D$ that we need to compute. Note that a generic point in $\delta_0^{ram}$ has a smooth Prym (we have $p:\delta_0^{ram}\dashrightarrow\calA_{g-1}$), while a generic point of $\delta_0'$ maps to a point in the boundary of $\oAg$. Thus we know a priori that
\begin{equation}\label{apriori}
 p^*L=a\lambda_1-b\delta_0^{ram};\qquad p^*D=c\delta_0'
\end{equation}
for some $a,b,c\in\QQ$, and the goal is to compute these coefficients. The standard method
would be to do some test curve computations, by choosing three curves in $\oRg$ and computing their
intersections with $\lambda_1,\delta_0',$ and $\delta_0^{ram}$, and the intersections of their images
under $p$ with $L$ and $D$. However, this seems rather tricky, as it requires knowing the explicit
geometry of the Prym map. Instead, we compute analytically the pullback of the theta-null divisor
by using the Schottky-Jung proportionality.

\smallskip
We recall that the Schottky-Jung proportionality, due classically to
Schottky \cite{schottky} and Schottky-Jung \cite{scju}, and in its modern form to H.~Farkas and
Rauch \cite{farascju}, relates the values of theta constants of the curve and of the Prym. We recall that
theta constants with characteristics $\ep,\de\in(\ZZ/2\ZZ)^g$, written as strings of zeroes
and ones, are defined as
$$
 \tt\ep\de(\tau):=\sum\limits_{n\in\ZZ^g}\exp\left(\pi i \left(n+\ep/2\right)^t
 \left(\tau\left(n+\ep/2\right)+\de\right)\right)
$$
where $\tau\in\calH_g$ is a point in the universal cover of $\calA_g$ --- the Siegel upper
half-space of symmetric $g\times g$ matrices with positive definite imaginary part.
Alternatively we can think of $m=\frac{\tau\ep+de}{2}\in (\ZZ/2\ZZ)^{2g}$ as a point of order two on the
ppav, and write this as $\theta_m(\tau)$. A characteristic $m$ is called even or odd depending on
whether the scalar product $\ep\cdot\de\in\ZZ/2\ZZ$ is 0 or 1, respectively. All
odd theta constants vanish identically, and there are $2^{g-1}(2^g+1)$ even characteristics.

For the Schottky-Jung relations, if $\eta$ is chosen to be
$\eta=\left[\begin{matrix}0&0&\ldots&0\\1&0&\ldots&0\end{matrix}\right]$, then the relation is
\begin{equation}
 \tt\ep\de(\sigma)^2= \op{const}\cdot\tt{0\,\ep}{0\,\de}(\tau)\cdot\tt{0\,\ep}{1\,\de}(\tau)
\end{equation}
where $\tau$ is the period matrix of the curve $C$, $\sigma$ is the period matrix of the Prym,
and the constant is independent of the characteristic $\ep,\de$.

In general for arbitrary
$\eta$ the Schottky-Jung relation can be obtained by applying modular transformations to
 the relation above. The result is as follows:
for a suitable embedding $j:(\ZZ/2\ZZ)^{2(g-1)}\hookrightarrow (\ZZ/2\ZZ)^{2g}$ such
that $\op{Im}(j)\subset \eta^\perp$, we have
\begin{equation}\label{SJ}
 \theta_n^2(\sigma)=\phi(n)\cdot\op{const}\cdot\theta_{j(n)}(\tau)\cdot\theta_{j(n)+\eta}(\tau),
\end{equation}
where $\phi$ is now an eighth root of unity depending on $n$ (this situation was
studied in more detail in \cite{farkashscju} for one particular choice of $\eta$, where $\phi(n)$ was derived explicitly).
We refer to \cite{faprymsurvey,grsurvey} for history, more details, and further references on the Schottky-Jung proportionalities.

\smallskip
We will now pull back the theta-null divisor divisor on $\oAg$ under the Prym map.
Recall that the theta-null divisor is defined as
\begin{equation}\label{tndef}
 \theta_{\rm null}:=\lbrace \tau\in \calA_g\mid \prod\limits_{m\in(\ZZ/2\ZZ)^{2g}_{\rm even}} \theta_m(\tau)=0\rbrace
\end{equation}
and its closure in $\overline{\calA_g}$ has class
\begin{equation}\label{tnclass}
 [\theta_{\rm null}]=2^{g-2}(2^g+1)L-2^{2g-5}D
\end{equation}
which follows from the fact that in genus $g$ there are $2^{g-1}(2^g+1)$ even theta constants, each of
which is a modular form of weight one half, together with a vanishing order computation
(see \cite{freitagbooksiegel,mumforddimag}). By using the Schottky-Jung proportionality (\ref{SJ}) we
compute the pullback of the sixteenth power of the theta-null divisor:
\begin{equation}\label{tnpull}
 8\pi^*[2\theta_{\rm null}(\sigma)]=8\left[\left\lbrace (\tau,\eta)\in\calR_g\mid \prod\limits_{m\in\eta^\perp_{\rm even}} \theta_m(\tau)=0\right\rbrace\right]
\end{equation}
(notice that the eighth root of unity in (\ref{SJ}) is torsion and does not matter for divisor class computations over $\QQ$).

The right-hand-side of the equation above is the zero locus of a product of $2\cdot 2^{g-2}(2^{g-1}+1)$ theta constants. Indeed,
for example for the standard choice of characteristic $\eta=\left[\begin{matrix}0&0&\ldots&0\\1&0&\ldots&0\end{matrix}\right]$, the set
$\eta^\perp_{\rm even}$ consists of all characteristics of the form $\left[\begin{matrix}0&\alpha\\x&\beta\end{matrix}\right]$
where $x$ is arbitrary,  and $\alpha,\beta$ is even. Since each theta constant is a modular form
of weight one half, we get for the pullback of twice the theta-null divisor, using (\ref{tnclass}) for $g-1$,
$$
 2p^*(2^{g-3}(2^{g-1}+1)L-2^{2g-7}D)=2^{g-2}(2^{g-1}+1)\lambda_1+\ldots
$$
It thus remains to compute the boundary coefficients of this pullback. Equivalently, this means computing the
slope of the modular form, so we need to compute the vanishing order of the right-hand-side of (\ref{tnpull}) near various
boundary components of $\oRg$. Recall (see \cite{vgeemenscju,donagiscju}) that boundary components themselves correspond
to points $\mu$ of order two, and the components $\delta_0'$ and $\delta_0^{ram}$ correspond to the cases of
$\mu\cdot\eta$ being 0 and 1, respectively (while $\mu\ne\eta$, which would be $\delta_0''$). For an analytic computation,
instead of fixing $\eta$ it is easier to fix $\mu$, i.e.~to choose a standard boundary component, and study the
degeneration there. This means we study the vanishing order of the right-hand-side of (\ref{tnpull}) as the period
matrix of the curve degenerates as $\tau\to \begin{pmatrix} i\infty& b^t\\ b&\tau'\end{pmatrix}$ for some $b\in\CC^{g-1}$
and some $\tau'\in\calH_{g-1}$. This corresponds to the case of the standard cusp
$\mu=\left[\begin{matrix}0&0&\ldots&0\\1&0&\ldots&0\end{matrix}\right]$.
A theta constant $\tt{x&\ep}{y&\de}(\tau)$ in such a limit has vanishing order 0 (approaches the generically non-zero
$\tt\ep\de(\tau')$) if and only if $x=0$, and has vanishing order $1/8$ if $x=1$ (in which case the lowest order term
in the Fourier-Jacobi expansion is $O(q^{1/8})$) along the boundary of $\oMg$. We refer to \cite{grhu2} for a related detailed discussion. Note, however, that since the map $\pi:\oRg\to\oMg$ has branching order two along $\delta_0^{ram}$, the local coordinate $w$ transverse to $\delta_0^{ram}$ on $\oRg$ is the square root of the local coordinate $q$ transverse to $\delta_0$ on $\oMg$, and thus the vanishing order on $\delta_0$ needs to be computed in terms of $q^{1/2}$ --- i.e.~will be double the vanishing order computed in terms of $q$.

Thus to compute the vanishing order of the right-hand-side of (\ref{tnpull}) at the standard cusp we simply need to
count the number of characteristics $m\in\eta^\perp_{\rm even}$ with $x=1$, multiplied by $1/8$. In the case of $\delta_0^{ram}$,
corresponding to $\mu\cdot\eta=1$, we can choose for example
$$
 \eta=\left[\begin{matrix}1&0&\ldots&0\\ 0&0&\ldots&0\end{matrix}\right],\qquad{\rm so\ that}\qquad
 \eta^\perp_{\rm even}=\left[\begin{matrix}x&\alpha\\0&\beta\end{matrix}\right]
$$
for arbitrary $x$ and any $\alpha,\beta\in(\ZZ/2\ZZ)^{2(g-1)}_{even}$. Thus we have $2^{g-2}(2^{g-1}+1)$ characteristics in
$\eta^\perp_{\rm even}$ with $x=1$, and the vanishing order here is $q^{2^{g-2}(2^{g-1}+1)/8}$. Thus along $\delta_0^{ram}$, with coordinate $w=q^{1/2}$ transverse to the boundary, the vanishing order is $2^{g-4}(2^{g-1}+1)$.

Similarly, for the case of $\delta_0'$, corresponding to $\mu\cdot\eta=0$, we can choose
$$
 \eta=\left[\begin{matrix}0&0&0&\ldots&0\\ 0&1&0&\ldots&0\end{matrix}\right],\qquad{\rm so\ that}\qquad
 \eta^\perp_{\rm even}=\left[\begin{matrix}x&0&\alpha\\y&z&\beta\end{matrix}\right]
$$
for $x,y,z\in\ZZ/2\ZZ$ and $\alpha,\beta\in(\ZZ/2\ZZ)^{g-2}_{\rm even}$ such that altogether the characteristic
is even, i.e.~$xy+\alpha\cdot\beta=0$. To compute the number of such characteristics with $x=1$, we note that for $x=1$
the parity is equal to $y+\alpha\cdot\beta$. Thus if one chooses $z,\alpha,\beta$ arbitrarily (so there are
$2\cdot 2^{2g-4}$ such choices), there exists a unique $y$ such that together with $x=1$ the overall characteristics
is even. Thus the total vanishing order here is equal to $1/8\cdot 2^{2g-3}$.

Using these vanishing orders, we finally get
\begin{prop}
For any $g\ge 6$ the pullback of the theta-null divisor under the Prym map is given by
$$\begin{aligned}
 p^*(2\theta_{\rm null})&= p^*(2^{g-2}(2^{g-1}+1)L-2^{2g-6}D)\\
&=2^{g-2}(2^{g-1}+1)\lambda_1-2^{g-4}(2^{g-1}+1)\delta_0^{ram}-2^{2g-6}\delta_0'.
\end{aligned}
$$
\end{prop}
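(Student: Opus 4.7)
The plan is to compute $p^*(2\theta_{\rm null})$ in two stages: first rewrite it as $p^*(2^{g-2}(2^{g-1}+1)L-2^{2g-6}D)$ using (\ref{tnclass}), then evaluate the three coefficients of $\lambda_1$, $\delta_0^{ram}$, and $\delta_0'$ in the pullback by means of the Schottky-Jung proportionality (\ref{SJ}).

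The first equality is immediate from (\ref{tnclass}) applied in dimension $g-1$: $[\theta_{\rm null}(\sigma)]=2^{g-3}(2^{g-1}+1)L-2^{2g-7}D$ on $\overline{\calA_{g-1}}$, and doubling gives the claimed class. For the second equality, I would square the product defining $\theta_{\rm null}(\sigma)$ in (\ref{tndef}) and apply (\ref{SJ}) termwise; modulo the eighth-root-of-unity factors $\phi(n)$, which are torsion in $\Pic_\QQ$, this yields (\ref{tnpull}), so that $p^*[2\theta_{\rm null}(\sigma)]$ equals the class of the product $\prod_{m\in\eta^\perp_{\rm even}}\theta_m(\tau)$ on $\oRg$. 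The $\lambda_1$-coefficient is then just the total modular weight of this product: $|\eta^\perp_{\rm even}|=2^{g-1}(2^{g-1}+1)$ (characteristics indexed by even $(\alpha,\beta)\in(\ZZ/2\ZZ)^{2(g-1)}$ together with a free parameter $x\in\ZZ/2\ZZ$) times $1/2$, yielding $2^{g-2}(2^{g-1}+1)$, as claimed.

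To compute the boundary coefficients I would study the vanishing order of the same product along each of $\delta_0^{ram}$ and $\delta_0'$, fixing the standard cusp $\mu$ via the degeneration $\tau\to\left(\begin{smallmatrix}i\infty&b^t\\b&\tau'\end{smallmatrix}\right)$ and varying $\eta$ within the appropriate $G_\eta$-orbit. The local observation is that $\theta_m(\tau)$ vanishes to order $1/8$ in $q$ precisely when the top-left entry $x$ of $m$ equals $1$, and is generically nonzero otherwise. For $\delta_0^{ram}$ (the orbit $\mu\cdot\eta=1$), take $\eta=\left[\begin{smallmatrix}1&0&\cdots&0\\0&0&\cdots&0\end{smallmatrix}\right]$; exactly half of $\eta^\perp_{\rm even}$ have $x=1$, namely $2^{g-2}(2^{g-1}+1)$ characteristics, giving vanishing order $2^{g-5}(2^{g-1}+1)$ in $q$. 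Since $\pi$ is branched to order $2$ along $\delta_0^{ram}$ by Lemma~\ref{pi}, the transverse coordinate is $w=q^{1/2}$, so the order in $w$ doubles to $2^{g-4}(2^{g-1}+1)$. For $\delta_0'$ (the orbit $\mu\cdot\eta=0$, $\mu\ne\eta$), take $\eta=\left[\begin{smallmatrix}0&0&0&\cdots&0\\0&1&0&\cdots&0\end{smallmatrix}\right]$; the characteristics in $\eta^\perp_{\rm even}$ with $x=1$ are parametrized by $(z,\alpha,\beta)$ free with $y$ uniquely determined by the parity constraint $y+\alpha\cdot\beta=0$, yielding $2^{2g-3}$ characteristics and vanishing order $2^{2g-6}$ (no ramification correction, since $\pi$ is unramified along $\delta_0'$).

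Finally, invoking the preceding proposition ensures that the pullback involves no $\delta_i$ (with $i>0$) or $\delta_0''$ components, so the three coefficients above assemble to the claimed identity. The main obstacle is the bookkeeping: the combinatorial enumeration of even characteristics in $\eta^\perp$ with $x=1$ under the quadratic parity constraint, and --- the more delicate point --- the factor-of-$2$ correction converting the $q$-vanishing order on $\oMg$ into the $w$-vanishing order on $\oRg$ along $\delta_0^{ram}$, required because of the ramification of $\pi$ there but absent for $\delta_0'$.
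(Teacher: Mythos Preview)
Your proposal is correct and follows essentially the same route as the paper: rewrite $2\theta_{\rm null}$ via (\ref{tnclass}) in dimension $g-1$, apply the Schottky--Jung proportionality (\ref{SJ}) to obtain (\ref{tnpull}), read off the $\lambda_1$-coefficient from the total modular weight, and compute the $\delta_0^{ram}$ and $\delta_0'$ coefficients by the same combinatorial count of characteristics with $x=1$ at the standard cusp (with the same choices of $\eta$ and the same ramification correction $w=q^{1/2}$ along $\delta_0^{ram}$). The only cosmetic difference is that you explicitly invoke the preceding proposition to exclude $\delta_i$ and $\delta_0''$, whereas the paper packages this into the a priori shape (\ref{apriori}).
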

For the sake of completeness, we also note that the vanishing order computation can also be done along $\delta_0''$, showing that the vanishing order is zero there. Thus even for $g\le 6$, when $p$ does not contract $\delta_0''$, this class would not appear in the pullback.

As a corollary, we get our first main result
\begin{thm}\label{thm:pull}
For any $g\ge 6$ the pullback of the divisor classes under the Prym map is given by
$$
 p^*L=\lambda_1-\frac14\delta_0^{ram};\quad p^*D=\delta_0'.
$$
\end{thm}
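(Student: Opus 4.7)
My plan is to derive the theorem as a direct linear-algebra consequence of the preceding proposition, exploiting the a priori description (\ref{apriori}) of the pullbacks. The main work has already been done --- the theta-null computation via Schottky-Jung --- so this last step should be essentially bookkeeping.

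First, I would recall the a priori form: since $p$ contracts the classes $\delta_i$ ($i > 0$) and $\delta_0''$ for $g \ge 6$, we have
\[
 p^*L = a\lambda_1 - b\delta_0^{ram}, \qquad p^*D = c\delta_0'
\]
for some $a, b, c \in \QQ$. Next, I would apply $p^*$ (which is $\QQ$-linear on divisor classes) to the class of twice the theta-null divisor on $\oAg$, namely $2^{g-2}(2^{g-1}+1)L - 2^{2g-6}D$, obtained from (\ref{tnclass}) with $g$ replaced by $g-1$. This gives
\[
 p^*(2\theta_{\rm null}) = 2^{g-2}(2^{g-1}+1)(a\lambda_1 - b\delta_0^{ram}) - 2^{2g-6}c\,\delta_0'.
\]

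Finally, I would match this against the explicit formula supplied by the preceding proposition,
\[
 p^*(2\theta_{\rm null}) = 2^{g-2}(2^{g-1}+1)\lambda_1 - 2^{g-4}(2^{g-1}+1)\delta_0^{ram} - 2^{2g-6}\delta_0'.
\]
Comparing coefficients of the three independent classes $\lambda_1$, $\delta_0^{ram}$, $\delta_0'$ (which are linearly independent in $\Pic_\QQ(\oRg)$), I read off $a = 1$ from the $\lambda_1$-coefficient, $b = 1/4$ from the $\delta_0^{ram}$-coefficient (dividing by $2^{g-2}(2^{g-1}+1)$), and $c = 1$ from the $\delta_0'$-coefficient. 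Substituting back into the a priori expressions yields the stated formulas.

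The only conceivable obstacle is ensuring that the class $2\theta_{\rm null}$ genuinely admits the claimed expression in $L$ and $D$ simultaneously on the source and target sides with matching conventions --- that is, verifying that the vanishing-order normalizations used in the proposition are consistent with the formula (\ref{tnclass}) applied in dimension $g-1$. Both are standard and recorded in \cite{freitagbooksiegel,mumforddimag}, so this is a sanity check rather than a real difficulty; no test-curve computation is required.
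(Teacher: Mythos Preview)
Your proposal is correct and is exactly the approach the paper takes: the paper's proof simply says ``use the proposition above to determine the coefficients in (\ref{apriori}),'' and you have spelled out that coefficient-matching explicitly. No alternative method or additional input is needed.
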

\begin{proof}
To prove the theorem we use the proposition above to determine the coefficients in (\ref{apriori}).
\end{proof}

\section{Bounds on the slope of $\calA_5$}
We recall that the slope of an effective divisor $E=aL-bD$ on $\oAg$ with $a,b\ge 0$ is defined to be $s:=a/b$. If $E$ is the zero divisor of a modular form $f$, then its slope is the quotient of its weight $a$ and its vanishing order $b$ at the boundary (so the slope is finite only if $f$ is a cusp form). Similarly, the slope of an effective divisor $E=a\lambda_1-b_0\delta_0-\sum b_i\delta_i$ on $\oMg$ is defined to be the minimum of all ratios $a/b_i$. Farkas and Popa \cite{fapo} proved that if the first ratio $a/b_0$ is sufficiently small, then the slope of $E$ is equal to $a/b_0$.

\begin{rem}\label{rem:AM}
An upper bound for the slope of the effective cone (the minimal slope of effective divisors) on $\overline{\calA_5}$ in theorem \ref{thm:slope} is provided by the Andreotti-Mayer divisor ---
the component of the locus of ppav with a singular theta divisor different from the theta-null \cite{anma}. We recall that its class was computed by Mumford \cite{mumforddimag} in arbitrary genus:
$$
 [N_0']=\left(\frac{(g+1)!}4+\frac{g!}{2}-2^{g-3}(2^g+1)\right)L-\left(\frac{(g+1)!}{24}-2^{2g-6}\right)D.
$$
Thus in genus 5 we have on $\overline{\calA_5}$ the following formula for its slope:
$$
 [N_0']=108L-14D;\qquad s(N_0')=\frac{108}{14}=7+\frac{5}{7}=7.7142\ldots
$$
yielding the upper bound in theorem \ref{thm:slope}.
\end{rem}

To obtain a lower bound for the slope of effective divisors on $\overline{\calA_5}$, we consider the map $\pi_*p^*$. Combining theorem \ref{thm:pull} and lemma \ref{pi} allows us to go from $\oAg$ to $\oMg$:
\begin{prop}
We have the following formulas for the map $\pi_*p^*$ on divisors:
$$
\begin{aligned}
 \pi_*p^*L&=\pi_*(\lambda_1-\delta_0^{ram}/4)=(2^{2g}-1)\lambda_1-2^{2g-4}\delta_0;\\
 \pi_*p^*D&=\pi_*(\delta_0')=(2^{2g-1}-2)\delta_0.
\end{aligned}
$$
\end{prop}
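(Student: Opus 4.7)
The plan is to prove this proposition by straightforward substitution, combining the two results already established: Theorem \ref{thm:pull} (which computes $p^*L$ and $p^*D$) and Lemma \ref{pi} (which computes the pushforward $\pi_*$ on the relevant generators of $\op{Pic}_\QQ(\oRg)$). Since both formulas appearing in the proposition are explicitly stated as equalities that are already ``half done'' by Theorem \ref{thm:pull}, all that remains is to apply $\pi_*$ to the right-hand sides.

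For the first identity, I would start by substituting $p^*L = \lambda_1 - \tfrac14 \delta_0^{ram}$ from Theorem \ref{thm:pull}. Then I would apply $\pi_*$ term by term, using $\QQ$-linearity, which gives
$$
 \pi_*p^*L = \pi_*(\lambda_1) - \tfrac14\, \pi_*(\delta_0^{ram}).
$$
By Lemma \ref{pi}, $\pi_*(\lambda_1) = (2^{2g}-1)\lambda_1$ and $\pi_*(\delta_0^{ram}) = 2^{2g-2}\delta_0$, so the right-hand side becomes
$$
 (2^{2g}-1)\lambda_1 - \tfrac14 \cdot 2^{2g-2}\delta_0 = (2^{2g}-1)\lambda_1 - 2^{2g-4}\delta_0,
$$
as claimed. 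For the second identity, I would simply substitute $p^*D = \delta_0'$ from Theorem \ref{thm:pull} and then quote $\pi_*(\delta_0') = (2^{2g-1}-2)\delta_0$ from Lemma \ref{pi}.

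Since this is a purely formal consequence of two previously established results, there is no real obstacle in the argument; the only thing worth checking carefully is the arithmetic of the factor $\tfrac14 \cdot 2^{2g-2} = 2^{2g-4}$, and the fact that the pullback and pushforward are both being applied to divisor classes in $\op{Pic}_\QQ$ (so that the $\QQ$-coefficient $\tfrac14$ causes no issue). The substantive content of the proposition is already contained in Theorem \ref{thm:pull} and Lemma \ref{pi}; packaging the composite $\pi_* p^*$ in this form is what will be needed later to extract the slope lower bound in Theorem \ref{thm:slope}, by transferring slope information from $\oMg$ (where bounds such as those of Harris-Morrison or Farkas are available) back to $\overline{\calA_g}$.
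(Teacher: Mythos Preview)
Your proposal is correct and matches the paper exactly: the paper presents this proposition as an immediate consequence of combining Theorem~\ref{thm:pull} with Lemma~\ref{pi}, without giving any further argument. Your term-by-term substitution and the arithmetic check $\tfrac14\cdot 2^{2g-2}=2^{2g-4}$ are precisely what is implicitly being used.
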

\begin{rem}
As a good consistency check for our formulas we can verify that the pullback of the theta-null divisor on $\oAg$ to $\oRg$, pushed down to $\oMg$, gives a multiple of the theta-null divisor. Indeed, we compute

$$\begin{aligned}
 \pi_*p^*(\theta_{\rm null})
 &=2^{g-3}(2^{g-1}+1)\left((2^{2g}-1)\lambda_1-(2^{2g-4}+2^{2g-4}-2^{g-3})\delta_0\right)\\
 &=2^{g-3}(2^{g-1}+1)\left((2^{2g}-1)\lambda_1-(2^{2g-3}-2^{g-3})\delta_0\right)\\
 &=2^{g-3}(2^{g-1}+1)(2^g-1)\left((2^{g}+1)\lambda_1-2^{g-3}\delta_0\right)\\
 &=(2^{g-1}+1)(2^g-1)\left(2^{g-3}(2^{g}+1)\lambda_1-2^{2g-6}\delta_0\right)
 \end{aligned}
$$
\end{rem}

The corollary of this computation that we are interested in is a lower bound for the slope of the effective cone of $\overline{\calA_5}$, proving the new inequality in theorem \ref{thm:slope}.
\begin{cor}
For any effective divisor $E=aL-bD$  on $\overline{\calA_5}$, with $a,b>0$, its slope $s=a/b$ satisfies
$$
 s\ge 7+\frac{4198}{6269},\qquad{\rm i.e.\ } s\ge 7.6696\ldots
$$
\end{cor}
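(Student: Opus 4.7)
The plan is to push an effective divisor $E = aL - bD$ on $\overline{\calA_5}$ down to $\overline{\calM_6}$ via the composition $\pi_*p^*$ just computed, and then to apply a known lower bound on the slope of the effective cone of $\overline{\calM_6}$. Specializing the formulas in the Proposition to $g = 6$ yields
\[
\pi_*p^*E \;=\; 4095\,a\,\lambda_1 \;-\; (256\,a + 2046\,b)\,\delta_0.
\]

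First I would check that this class is effective on $\overline{\calM_6}$. Since the indeterminacy locus of the Prym rational map $p$ has codimension at least two in $\overline{\calR_6}$, the pullback $p^*E$ is represented by an effective divisor, and the finite morphism $\pi$ preserves effectiveness under pushforward. Crucially, no $\delta_i$ with $i\geq 1$ appears in the resulting class, so by the slope convention recalled at the beginning of this section the slope of $\pi_*p^*E$ is unambiguously
\[
\frac{4095\,a}{256\,a + 2046\,b}.
\]

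Second, I would invoke the known lower bound $s(\overline{\calM_6}) \geq 47/6$ on the slope of the effective cone in genus six. The resulting inequality
\[
\frac{4095\,a}{256\,a + 2046\,b} \;\geq\; \frac{47}{6}
\]
rearranges, after clearing denominators $6\cdot 4095\,a \geq 47\cdot 256\,a + 47\cdot 2046\,b$, into
\[
\frac{a}{b} \;\geq\; \frac{47\cdot 2046}{6\cdot 4095 - 47\cdot 256} \;=\; \frac{96162}{12538} \;=\; \frac{48081}{6269} \;=\; 7 + \frac{4198}{6269},
\]
which is the asserted lower bound.

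The hard part is really just the identification and invocation of the correct lower slope bound on $\overline{\calM_6}$: the rest of the argument is routine bookkeeping with the pushforward-pullback formulas already in hand, and the precise value $47/6$ is exactly what produces the otherwise mysterious denominator $6269$ in the statement of the Corollary.
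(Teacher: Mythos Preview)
Your proof is correct and follows essentially the same approach as the paper: compute $\pi_*p^*E$ via the Proposition, observe it is effective on $\overline{\calM_6}$, and compare against the minimal slope $47/6$ (the Gieseker--Petri bound) to solve for $a/b$. The one point the paper makes explicit that you leave implicit is that $p:\overline{\calR_6}\dashrightarrow\overline{\calA_5}$ is \emph{dominant}, which is what guarantees (together with the codimension-$\ge 2$ indeterminacy) that $p^*E$ is actually represented by an effective divisor rather than all of $\overline{\calR_6}$; you should state this.
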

\begin{proof}
Indeed, using the proposition we compute
$$
 \pi_*p^*[E]=\pi_*(p^*(aL-bD))=(2^{2g}-1)a\lambda_1-((2^{2g-1}-2)b+2^{2g-4}a)\delta_0.
$$
Since the map $p:\overline{\calR_6}\dashrightarrow\overline{\calA_5}$ is dominant, the class $\pi_*p^*[E]$
is the class of an effective divisor on $\overline{\calM_6}$ (notice that the class of the strict transform of $p^{-1}(E)\subset\overline{\calR_6}$ may of course not be equal to $p^*[E]$, but since we have $p^*[E]=[p^{-1}(E)]+{\rm Exceptional}$, we certainly know that $p^*[E]$ is the class of an effective divisor), and thus its slope is bounded below by
the minimal slope of effective divisors on $\overline{\calM_6}$. The minimal slope of effective divisors on $\overline{\calM_6}$ is equal to $\frac{47}6$, achieved by the Gieseker-Petri divisor (note that
$6+1$ is prime, so there is no corresponding Brill-Noether divisor). Thus we must have
$$
 \frac{(2^{2g}-1)a}{2^{2g-4}a+(2^{2g-1}-2)b}\ge \frac{47}{6}
$$
solving which for $g=6$ yields
$$
 s\ge \frac{48081}{6269}=7+\frac{4198}{6269}=7.6696\ldots
$$
\end{proof}

\begin{rem}
We also note a peculiar consequence of our computations for the case of $g=7$. Recall that the Kodaira dimension of $\overline{\calA_6}$ is not known. The slope of the canonical bundle $s(K_{\overline{\calA_6}})$ is equal to 7, and thus for the Kodaira dimension of $\overline{\calA_6}$ to be nonnegative is equivalent to the divisor $7L-D$ being effective. We then compute in this case for the slope
$$
 s(\pi_*p^*(7L-D))=s(7\cdot(2^{14}-1)\lambda_1-(7\cdot2^{10}+2^13-2)\delta_0)=7+\frac{1025}{2194}=7.4\ldots.
$$
However, the minimal slope of effective divisors on $\overline{\calM_7}$ is achieved by the Brill-Noether divisor, which has slope $6+\frac{12}{6+1}=7.7\ldots$. Thus any effective divisor on $\overline{\calA_6}$ of slope at most 7 must contain the Prym locus $p(\calR_7)$.
\end{rem}

\section*{Appendix, by Klaus Hulek}

In this appendix we will discuss the notion of {\em slope} for arbitrary toroidal compactifications
$\mathcal A_g^{\operatorname{tor}}$ of $\mathcal A_g$. Besides the perfect cone or first Voronoi compactification
$\overline{{\mathcal A}_{g}}=\mathcal A_g^{\operatorname{Perf}}$ discussed in the main paper, two other
toroidal compactifications have been considered by numerous authors,
namely the second Voronoi compactification $\mathcal A_g^{\operatorname{Vor}}$ and the central cone
compactification $\mathcal A_g^{\operatorname{cent}}$ which coincides with the Igusa compactification.

The perfect cone compactification has the special property that the boundary is irreducible and thus, over $\QQ$,
the group of Weil divisors coincides with the group of Cartier divisors, being generated by the Hodge line bundle
$L$ and the boundary $D$. For general toroidal compactifications the situation is much more difficult: the boundary is not necessarily
irreducible (this already happens for the second Voronoi compactification in genus $4$, where the
Picard number is $3$, see \cite{husaA4}), and thus one cannot expect that the variety $\calA_g^{\operatorname{tor}}$ is
necessarily $\QQ$-factorial. Nevertheless it is true that for the group
of Weil divisors one has
$$
\operatorname{Div} (\mathcal A_g^{\operatorname{tor}}) \otimes \QQ = \QQ L + \QQ D_0 +\sum_{i=1}^r D_i,
$$
where $D_0$ is the closure of the boundary of Mumford's partial compactification, which is contained in all
toroidal compactifications, and the $D_i, i \geq 1$ are further boundary divisors. Recall that toroidal compactifications
are determined by the choice of a suitable fan in the rational closure of the space of positive definite real
$(g \times g)$-matrices and that the $D_i$ are in 1-to-1 correspondence with $\Sp(2g,\ZZ)$-orbits of rays in this fan.
Here $D_0$ cor\-res\-ponds to the orbit of rank $1$ matrices. Any irreducible effective Weil divisor $H$, which is not a
boundary component, can be
written in the form
$H=aL - b_0D_0 - \sum_{i=1}^r b_iD_i$ with $a, b_i \geq 0$. In analogy with the case of $\overline {\mathcal M}_g$
we suggest to define the {\em slope} of $H$ by
$$
\operatorname{slope}(H)= \frac{a}{\min \{b_i\}}
$$
and the {\em slope of $\mathcal A_g^{\operatorname{tor}}$}  as the infinum of $\operatorname{slope}(H)$ for all
effective $H$.

The main observation of this appendix is
\begin{thm}\label{theo:mainappendix}
For every toroidal compactification we have
$$
\operatorname{slope}(\mathcal A_g^{\operatorname{tor}})=\operatorname{slope}(\mathcal A_g^{\operatorname{Perf}}).
$$
\end{thm}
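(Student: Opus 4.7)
The plan is to prove both inequalities $\operatorname{slope}(\mathcal{A}_g^{\operatorname{Perf}}) \leq \operatorname{slope}(\mathcal{A}_g^{\operatorname{tor}})$ and the reverse, exploiting the birational geometry of toroidal compactifications relative to the common partial compactification $\mathcal{A}_g^{\operatorname{part}}$, on which all toroidal compactifications agree and outside of which $\mathcal{A}_g^{\operatorname{Perf}}$ has only codimension $\geq 2$ locus.

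For the first inequality, given an irreducible effective Weil divisor
\begin{equation*}
H = aL - b_0 D_0 - \sum_{i=1}^r b_i D_i
\end{equation*}
on $\mathcal{A}_g^{\operatorname{tor}}$ (with $a, b_i \geq 0$ and $H$ not a boundary component), I would form the closure $\bar H$ in $\mathcal{A}_g^{\operatorname{Perf}}$ of $H \cap \mathcal{A}_g^{\operatorname{part}}$. Since the boundary of $\mathcal{A}_g^{\operatorname{Perf}}$ is irreducible and its open dense rank-one part is already contained in $\mathcal{A}_g^{\operatorname{part}}$, the complement $\mathcal{A}_g^{\operatorname{Perf}} \setminus \mathcal{A}_g^{\operatorname{part}}$ has codimension at least two. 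Consequently the class of $\bar H$ in $\operatorname{Pic}_{\mathbb Q}(\mathcal{A}_g^{\operatorname{Perf}})$ is determined by its restriction to $\mathcal{A}_g^{\operatorname{part}}$, giving $[\bar H] = aL - b_0 D$. Since $\min_i b_i \leq b_0$, we get $\operatorname{slope}(\bar H) = a/b_0 \leq a/\min_i b_i = \operatorname{slope}(H)$, and taking infima over $H$ yields $\operatorname{slope}(\mathcal{A}_g^{\operatorname{Perf}}) \leq \operatorname{slope}(\mathcal{A}_g^{\operatorname{tor}})$.

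For the reverse inequality, given an effective $H' = aL - b_0 D$ on $\mathcal{A}_g^{\operatorname{Perf}}$, I would produce an effective divisor on $\mathcal{A}_g^{\operatorname{tor}}$ of slope at most $a/b_0$. I would choose a common toroidal refinement $\widetilde{\mathcal{A}_g}$ of both fans, with birational morphisms $\pi: \widetilde{\mathcal{A}_g} \to \mathcal{A}_g^{\operatorname{tor}}$ and $\pi': \widetilde{\mathcal{A}_g} \to \mathcal{A}_g^{\operatorname{Perf}}$, and form $\pi_*\pi'^* H'$. This is effective since $\pi'^* H'$ is a well-defined effective $\mathbb{Q}$-divisor ($\mathcal{A}_g^{\operatorname{Perf}}$ being $\mathbb{Q}$-factorial of Picard rank two), and its class on $\mathcal{A}_g^{\operatorname{tor}}$ has the form $aL - b_0 D_0 - b_0 \sum_i c_i D_i$ for non-negative rationals $c_i$ encoding how $\pi'^* D$ meets each $D_i$. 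The slope equals $a/b_0$ precisely when $c_i \geq 1$ for every $i$, which I expect to follow from the combinatorial fact that the support function of the boundary $D$ on the perfect cone fan takes value $1$ on every primitive ray generator in the rational closure of the cone of positive definite forms, so its pullback to any refining fan has value $\geq 1$ on every primitive ray generator.

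The main obstacle is precisely this combinatorial positivity claim, together with the $\Sp(2g, \mathbb{Z})$-equivariance of the refinement and the careful handling of $\pi_*$ on those boundary divisors of $\widetilde{\mathcal{A}_g}$ that are $\pi$-exceptional versus those that dominate a component of $\partial\mathcal{A}_g^{\operatorname{tor}}$. Once the bound $c_i \geq 1$ is established, taking infima over $H'$ gives $\operatorname{slope}(\mathcal{A}_g^{\operatorname{tor}}) \leq \operatorname{slope}(\mathcal{A}_g^{\operatorname{Perf}})$, completing the proof.
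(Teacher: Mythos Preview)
Your first inequality is correct and self-contained. The second is where the content lies, and there you correctly isolate the heart of the matter as a combinatorial positivity statement about the boundary of $\mathcal{A}_g^{\operatorname{Perf}}$ --- but you do not prove it, and your formulation of it is not quite right. The support function of $D$ on the perfect cone fan does \emph{not} take the value $1$ on every primitive ray generator in the rational closure of the cone of positive semidefinite forms; it takes the value $1$ only on the rank-one rays (these are the only rays of the perfect cone fan). On a primitive integral matrix $X$ lying in the interior of a perfect cone associated to a perfect form $Q$, the support function equals $\operatorname{tr}(QX)/m(Q)$, where $m(Q)$ is the minimum of $Q$. The statement you need, that this is $\ge 1$ for every nonzero integral positive semidefinite $X$, is exactly the theorem of Barnes and Cohn that the minimum of $\operatorname{tr}(QX)$ over such $X$ is always realised at a rank-one matrix $x^tx$. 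Without this input your second inequality is an unproved expectation.

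The paper's proof is more direct and avoids the detour through a common refinement and pullback/pushforward. It uses that every irreducible effective divisor not supported on the boundary is cut out by a single Siegel modular form $F$, so the coefficients $b_i$ are simply the vanishing orders of $F$ along the $D_i$, read off from the Fourier--Jacobi expansion at the relevant cusp. The same Barnes--Cohn theorem then says immediately that the minimal vanishing order already occurs at the rank-one boundary $D_0$, i.e.\ $\min_i b_i = b_0$, which gives both inequalities at once. Your geometric packaging is equivalent once the combinatorics is supplied, but as written it obscures the fact that the whole statement reduces to the Barnes--Cohn inequality, and the technicalities you flag (existence of an admissible $\Sp(2g,\ZZ)$-equivariant common refinement, sorting exceptional versus non-exceptional components under $\pi_*$) are extra bookkeeping that the modular-forms argument sidesteps entirely.
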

\begin{proof}
Recall that toroidal compactifications are normal and thus regular in codimension $2$. It is thus enough to consider the
stack-smooth part of $\mathcal A_g^{\operatorname{tor}}$ where we have at most finite quotient singularities and where
we can also disregard the difference between Cartier and Weil $\QQ$-divisors. Any irreducible divisor $H$ which is not a
boundary divisor is given as the zero-locus of a modular form $F$. We thus have to understand the vanishing order of the
form $F$ on the different boundary divisors. In fact, we claim that the minimal vanishing order occurs on the
boundary component $D_0$ of Mumford's partial compactification. Such an argument was first used by Tai \cite[p.426 ff]{tai}
in his proof that ${\mathcal A}_g$ is of general type for $g\geq 9$.

In order to prove this
we first remark that the map from the partial compactification of the quotient of Siegel space
$\HH_g$ by the center of the unipotent radical of the normalizer of a cusp to $\mathcal A_g^{\operatorname{tor}}$
is unramified \cite[Section 5]{tai} at a general point of each boundary component.
Hence it is sufficient to consider the Fourier-Jacobi expansion of a modular form $F(Z)$ with respect to the cusp associated
to a $g' < g$ dimensional isotropic subspace of $\QQ^{2g}$ (this cusp is unique modulo the action of
$\Sp(2g,\ZZ)$).
This expansion is of the form
$$
f(Z)=\sum_S {\theta}_S(\tau,w) e^{2 \pi i \operatorname{tr}(Sz)}
$$
where $g'+g''=g$, $\tau \in \HH_{g'}$, $w \in \operatorname{Mat}(g' \times g'',\CC)$ and
$z \in \operatorname{Mat}(g'' \times g'',\CC)$. Here $S$ runs through all $(g'' \times g'')$-dimensional
semi-integral, semi-positive matrices. The claim then follows from a result of Barnes-Cohn \cite{BC}, which says that the
minimum value of $\operatorname{tr}(SX)$, where $X$ is an integral, positive or semi-positive matrix, is attained
at some matrix of rank one, i.e.~at $X=x^tx$ for some $0 \neq x \in \ZZ^{g''}$. In other words the vanishing order
is minimal at the boundary divisor $D_0$.
\end{proof}

\begin{cor}
Theorem \ref{thm:slope} holds for all toroidal compactifications of ${\mathcal A}_5$.
\end{cor}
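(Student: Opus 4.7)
The plan is to deduce the corollary immediately from the two main ingredients already in place. Theorem \ref{thm:slope} gives the double bound $7 + 5/7 \geq s \geq 7 + 4198/6269$ for the slope of $\mathcal A_5^{\operatorname{Perf}} = \overline{\calA_5}$, and Theorem \ref{theo:mainappendix} of this appendix asserts $\operatorname{slope}(\mathcal A_g^{\operatorname{tor}}) = \operatorname{slope}(\mathcal A_g^{\operatorname{Perf}})$ for every toroidal compactification. Specialising to $g = 5$ and composing these two statements yields exactly the claim of the corollary, namely that both bounds of Theorem \ref{thm:slope} hold on an arbitrary toroidal compactification of $\mathcal A_5$.

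To make this rigorous I would verify one small compatibility point: the slope used in Theorem \ref{thm:slope} is $a/b$ for a divisor $aL - bD$ on $\mathcal A_5^{\operatorname{Perf}}$ with the irreducible boundary $D$, whereas the definition adopted in this appendix on a general $\mathcal A_g^{\operatorname{tor}}$ uses $a/\min\{b_i\}$ over all boundary coefficients. The content of Theorem \ref{theo:mainappendix}, via the Barnes--Cohn minimum theorem for $\operatorname{tr}(SX)$ applied to the Fourier--Jacobi expansion of the defining modular form, is precisely that $\min\{b_i\}$ is always attained at the coefficient $b_0$ along the rank-one boundary stratum $D_0$, which appears in every toroidal compactification and maps birationally to the unique boundary divisor of $\mathcal A_g^{\operatorname{Perf}}$. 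Hence the two notions of slope agree numerically under the birational comparison, and the bounds from Theorem \ref{thm:slope} transfer without change.

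I do not anticipate any serious obstacle, since all the substantive work has been done upstream: the lower bound on slope is a formal consequence of combining the two theorems, while the upper bound is still realised by the Andreotti--Mayer divisor $N_0'$, which is cut out by a modular form on $\mathcal A_5$ and thus defines (via its strict transform) an effective divisor of slope exactly $108/14 = 7 + 5/7$ on any toroidal compactification of $\mathcal A_5$. The only care needed is to keep track of the distinction between the divisor $p^*[E]$ on a modified compactification and the strict transform of an effective divisor under the birational map between two different toroidal models, but since vanishing orders of modular forms along $D_0$ are intrinsic to the Fourier--Jacobi expansion at the cusp and do not depend on the choice of fan, this causes no issue.
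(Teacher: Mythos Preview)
Your proposal is correct and follows exactly the intended line: the paper states the corollary without proof, treating it as an immediate consequence of Theorem~\ref{theo:mainappendix} applied at $g=5$, and your write-up simply unpacks this. The extra care you take in checking that the two notions of slope match (via $\min\{b_i\}=b_0$) and that the upper bound from $N_0'$ persists is accurate and in the spirit of the appendix's argument.
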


\begin{rem}
We would like to point out that the proof of Theorem \ref{theo:mainappendix} does not generalize to
moduli spaces of non-principally polarized abelian varieties as in this case the integral structure of
the rational closure of the space of positive definite $(g \times g)$-matrices is not the standard one.
\end{rem}

\end{document}